\newtheorem{theorem}{Theorem}
\newtheorem{definition}{Definition}
\newtheorem{proposition}{Proposition}
\newenvironment{proof}{\par \smallskip{\bf Proof:}}{\hfill\stopproof}
\def\stopproof{\square}
\def\square{\vbox{\hrule height.2pt\hbox{\vrule width.2pt height5pt \kern5pt
\vrule width.2pt} \hrule height.2pt}}
\begin{document}
\title{On a Faster $R$-Linear Convergence Rate of the Barzilai-Borwein Method}
\author{
{Dawei Li} \thanks{Department of ISE, Coordinated Science Laboratory, University of Illinois at Urbana-Champaign, Urbana, IL. \texttt{dawei2@illinois.edu}.}
{\quad \quad  \quad Ruoyu Sun} \thanks{Department of ISE, and affiliated to Coordinated Science Laboratory and Department of ECE, University of Illinois at Urbana-Champaign, Urbana, IL. \texttt{ruoyus@illinois.edu}.} 
\date{\today}
}
\maketitle

\begin{abstract}
    The Barzilai-Borwein (BB) method has demonstrated great empirical success in nonlinear optimization. However, the convergence speed of BB method is 
    not well understood, as the known convergence rate of BB method
     for quadratic problems is much worse than the steepest descent (SD) method.
     Therefore, there is a large discrepancy between theory and practice.
    To shrink this gap,
    we prove that the BB method converges $R$-linearly at a rate of $1-\frac{1}{\kappa}$, where $\kappa$ is the condition number, for strongly convex quadratic problems. In addition,  an example with the theoretical rate of convergence is constructed, indicating the tightness of our bound. 
\end{abstract}

\section{Introduction}\label{sec:intro}

The Barzilai-Borwein (BB) method is a simple and efficient gradient algorithm. It takes the same search direction as the classic steepest descent (SD) method \cite{cauchy1847methode}, with a somewhat delicate stepsize rule.
Since its proposal by Barzilai and Borwein \cite{barzilai1988two}, the BB method has been shown to give competitive performances in global optimization \cite{raydan1997barzilai}, and its extensions and variants have been utilized in large numbers of applications including compressed sensing \cite{figueiredo2007gradient}, image restoration \cite{wang2007projected}, sparse reconstruction \cite{wen2010fast}, signal processing \cite{liu2010coordinated}, matrix factorization \cite{huang2015quadratic}, machine learning \cite{tan2016barzilai} and distributed optimization \cite{gao2019geometric}. \\

Despite the great empirical success of the BB method, the non-monotone behaviors of the algorithm make it difficult to perform theoretical analysis. 
Most theoretical results of the BB method are derived for strictly convex quadratic objective functions. 
The earliest result was presented by Barzilai and Borwein \cite{barzilai1988two}, who provided an $R$-superlinear convergence proof in the $2$-dimensional case. In the general $n$-dimensional case, global convergence was proved by Raydan \cite{raydan1993barzilai}, and an $R$-linear convergence result was given by Dai and Liao \cite{dai2002r}, the proof technique of which was also utilized to show the $R$-linear convergence of other alternative stepsize methods \cite{dai2003alternate, dai2019family}. Dai and Fletcher \cite{dai2005asymptotic} also made an interesting asymptotic analysis, a corollary of which was $R$-superlinear convergence of BB method in the $3$-dimensional case. \\

Although these results demonstrated some good properties
of BB method, our understanding of the convergence rate of BB method
is still very limited.
In fact, the known convergence rate of BB method \cite{raydan1993barzilai,dai2002r}, which is roughly $\left(1-\frac{1}{\kappa}\right)^{1/n}$,
is much worse than the rate of SD method ($ 1 - \frac{2 }{\kappa+1}$, where $\kappa$ is the condition number). 
On one hand, this is understandable since in earlier days, researchers are very much interested in the qualitative behavior (linear convergence v.s. superlinear convergence), rather than the specific  convergence rate. 
On the other hand, there has been a recent trend of analyzing the rate of convergence in large-scale optimization (see, e.g., \cite{nestrov12,johnson2013accelerating}), since it is believed
that the specific rate can help us better understand these algorithms.
In this context, the current gap between the rate of \cite{dai2002r} 
and SD method is not desirable:
considering the excellent practical performance of BB method,
there is a huge discrepancy between theory and practice.
How far can we push the boundary of the theory? 
 
We conjectured that BB method has at least the same rate of convergence as SD methods. Although the conjecture seems not surprising, there
are plenty of examples in optimization area that
a seemingly fast method has very slow worst-case convergence rate.
A classical example is the simplex method
which was shown to have exponential time worst-case complexity \cite{zadeh2009worst} \footnote{More rigorously speaking,
for most common pivoting rules the worst-case
time is exponential, and there is no known (deterministic) variant
that has polynomial time.}.
A more recent example is the cyclic coordinate descent method which was found to have a rate of convergence that can be $O(n)$ times worse than gradient descent methods in the worst case \cite{sun2019worst}.
Similarly, it was unclear a priori whether BB method 
has a bad worst-case convergence rate.

In this paper, we prove that
the convergence rate of the BB method is at least as 
good as the SD method in the sense of $R$-linear convergence, for quadratic problems. In particular, we prove that for $n$-dimensional strongly convex quadratic functions, the BB method converges $R$-linearly with a rate of $1-\frac{1}{\kappa}$. This rate is indeed comparable to that of the SD method. Moreover, we show that the lower bound of the convergence rate, if we select special (degenerate) initial points, is exactly $1-\frac{1}{\kappa}$ by giving an example in the $n$-dimensional case. This finding indicates that our convergence rate cannot be further improved without additional assumptions. 

The outline of this paper is as follows. We first present the notations and introduce the BB method in Section \ref{sec:setting}. In Section \ref{sec:motivation} we provide some intuition about how the BB method proceeds during iterations. This process motivates the proof idea of categorizing modes in our main theorem, which is presented in Section \ref{sec:theorem}. The lower bound example, as well as some discussions, are provided in Section \ref{sec:lower-bound}. Finally, the conclusion is made in Section \ref{sec:conclusion}.

\section{The BB Method for the Quadratic Case}\label{sec:setting}

In this paper, we focus on the BB method for the $n$-dimensional quadratic case. Consider the following quadratic optimization problem:
\begin{equation}\label{eq:problem}
\underset{x\in\mathbb{R}^n}{\min}~f(x)=\frac{1}{2}x^\top Ax-c^\top x
\end{equation}
where $A\in \mathcal{S}^n_{++}$. The classic steepest descent (SD) method takes the negative gradient as the search direction and chooses the stepsize as the minimizer along the search direction. Denote $g_k=\nabla f(x_k)$, then the SD method can be represented as 
\begin{equation*}
    x_{k+1}=x_k-\alpha_k^{SD}g_k~~ \text{where}~~ \alpha_k^{SD}=\arg \underset{\alpha>0}{\min}~f(x_k-\alpha g_k).
\end{equation*}

The BB method takes the same search direction as the SD method. Nevertheless, it gives a two-point stepsize rule to automatically determine the stepsize. In particular, the stepsize $\alpha_k$ is selected such that $\alpha_k^{-1}I$ best approximates the Hessian $f$. Denote $s_{k-1}=x_k-x_{k-1}$ and $y_{k-1}=g_k-g_{k-1}$. Through an approach similar to the quasi-Newton method, the BB method solves the following problem
\begin{equation*}
    \arg \underset{\alpha>0}{\min}~\|\alpha^{-1}s_{k-1}-y_{k-1}\|_2,
\end{equation*}
which yields
\begin{equation*}
\alpha_k^{BB}=\frac{s_{k-1}^\top s_{k-1}}{s_{k-1}^\top y_{k-1}}.
\end{equation*}

In the following, we derive some concrete iterative relations for the quadratic case. Without losing generality, we assume $c=0$ since the BB method is invariant under translations. In this case, $g_k=Ax_k$. Therefore, we have 
\begin{equation*}
    s_{k-1}=x_k-x_{k-1}=\alpha_{k-1}g_{k-1},~~ y_{k-1}=g_k-g_{k-1}=A(x_k-x_{k-1})=\alpha_{k-1}A g_{k-1}.
\end{equation*}
Thus, the BB stepsizes $\alpha_k$ can be represented by $g_{k-1}$ and $A$:
\begin{equation*}
\alpha_k^{BB}=\frac{g_{k-1}^\top g_{k-1}}{g_{k-1}^\top Ag_{k-1}}.
\end{equation*}
We can further obtain the relation among the gradients $g_{k+1}, g_k$ and $g_{k-1}$ by:
\begin{equation}\label{eq:gradient}
g_{k+1}=Ax_{k+1}=Ax_k-\alpha_k^{BB}Ag_k=g_k-\frac{g_{k-1}^\top g_{k-1}}{g_{k-1}^\top Ag_{k-1}}Ag_k.
\end{equation}

We now compute the eigenvalue decomposition of relation \eqref{eq:gradient}. Without losing generality, assume that $A$ has eigenvalues $0<\lambda_1\leq\cdots\leq\lambda_n$ with the corresponding eigenvectors $v_1, \cdots, v_n$. Note that $v_1, \cdots, v_n$ form an orthogonal unit basis in $\mathbb{R}^n$, so $g_k$ is uniquely decomposed into $g_k=\sum_{i=1}^nd^i_kv_i$ where $d_k^1, \cdots, d_k^n$ are the coefficients. Thus we can also decompose \eqref{eq:gradient} into 
\begin{equation}\label{eq:decomposition}
    \begin{aligned}
        \sum_{i=1}^nd^i_{k+1}v_i&=\sum_{i=1}^nd^i_kv_i-\frac{(\sum_{j=1}^nd^j_{k-1}v_j)^\top (\sum_{j=1}^nd^j_{k-1}v_j)}{(\sum_{j=1}^nd^j_{k-1}v_j)^\top A(\sum_{j=1}^nd^j_{k-1}v_j)}\cdot A\sum_{i=1}^nd^i_kv_i\\
        &=\sum_{i=1}^nd^i_kv_i-\frac{(\sum_{j=1}^nd^j_{k-1}v_j)^\top (\sum_{j=1}^nd^j_{k-1}v_j)}{(\sum_{j=1}^nd^j_{k-1}v_j)^\top (\sum_{j=1}^nd^j_{k-1}\lambda_jv_j)}\cdot \sum_{i=1}^nd^i_k\lambda_iv_i\\
        &=\sum_{i=1}^nd^i_kv_i-\frac{\sum_{j=1}^n(d^j_{k-1})^2}{\sum_{j=1}^n\lambda_j(d^j_{k-1})^2}\cdot \sum_{i=1}^nd^i_k\lambda_iv_i.
    \end{aligned}
\end{equation}
Comparing the coefficients before each $v_i$ in (\ref{eq:decomposition}), we obtain
\begin{equation}\label{eq:coefficient}
d^i_{k+1}=d^i_k\cdot\left(\frac{\sum_{j=1}^n(\lambda_j-\lambda_i)(d^j_{k-1})^2} {\sum_{j=1}^n\lambda_j(d^j_{k-1})^2}\right), ~~i=1, \cdots, n.
\end{equation}
Specifically, in the first iteration we perform a step of the gradient descent method, yielding
\begin{equation}\label{eq:first-iter}
d^i_1=d^i_0\cdot\left(\frac{\sum_{j=1}^n(\lambda_j-\lambda_i)(d^j_0)^2} {\sum_{j=1}^n\lambda_j(d^j_0)^2}\right), ~~i=1, \cdots, n.
\end{equation}
Relation \eqref{eq:coefficient} gives the dynamics of the coefficients of each eigenvector direction as the BB method proceeds. From now on we will focus on analyzing this dynamics.

\section{Dynamics of the Coefficients}\label{sec:motivation}

In this section, by working out a simple example, we make some observations to provide an intuition about how the dynamics of the coefficients proceeds during the BB iteration.

Consider a $4$-dimensional quadratic minimization problem $\min_{x\in\mathbb{R}^4}\frac{1}{2}x^\top Ax$, where $A$ has eigenvalues $(\lambda_1, \lambda_2, \lambda_3, \lambda_4) = (0.001, 0.01, 0.1, 1)$, and the initial point is chosen subject to a uniform distribution between $[0, 1]$. The following figure is an example of the trajectories of the absolute value of the coefficients $|d_k^j|$ for $j=1, 2, 3, 4$ in the BB iteration\footnote{Although the trajectories can be different for some specific cases, this figure is a typical characterization of the trajectories in regard to the random initial point selection.}.

\begin{figure}[H]\label{fig:coef}
	\centering
    \includegraphics[width=6cm]{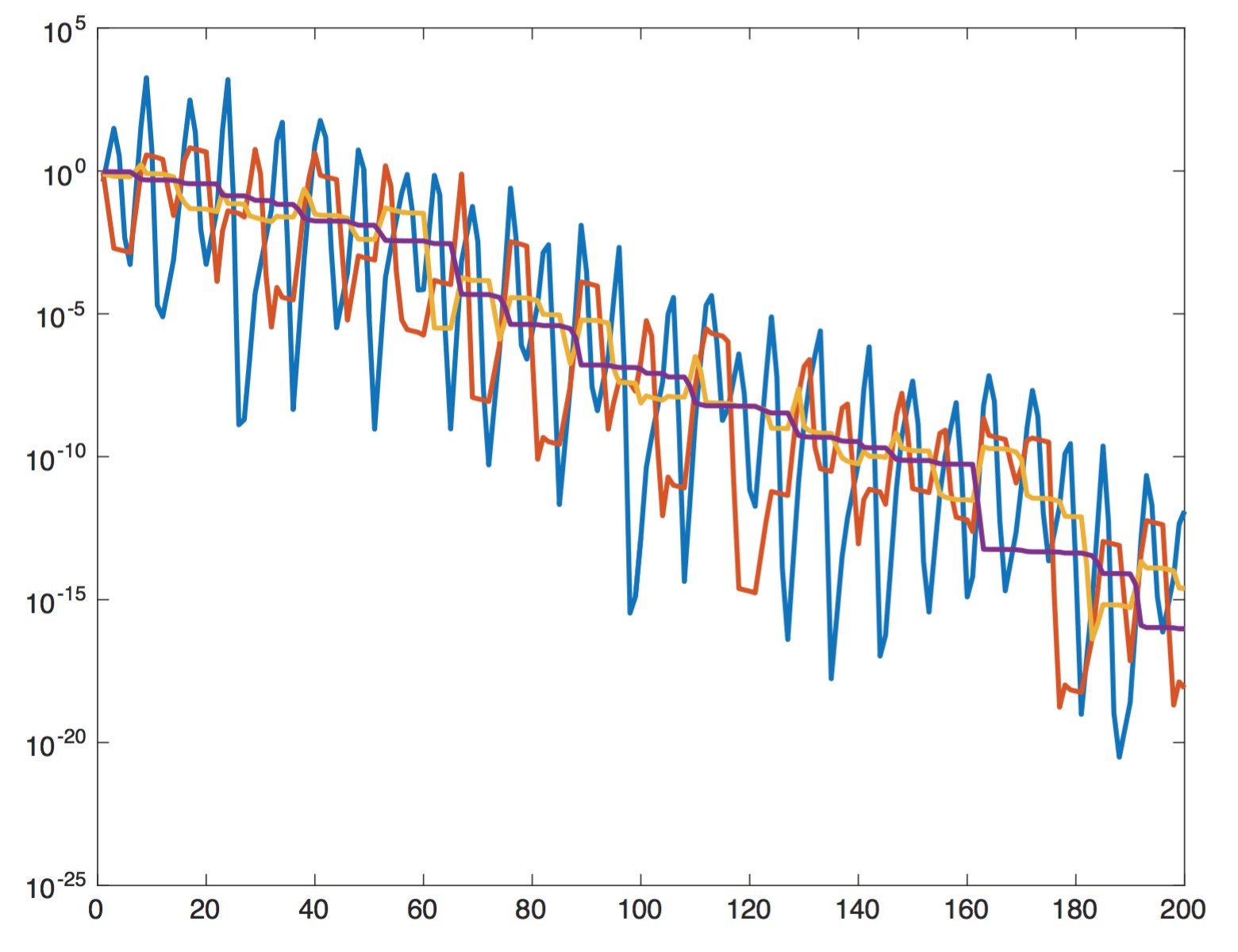}
    \caption{The trajectories of the absolute value of the coefficients $|d_k^j|$ in a $4$-dimensional BB iteration.}
\end{figure}

The purple, orange, red and blue lines in Figure \ref{fig:coef} correspond to the trajectories of the absolute value of the coefficients $|d_k^j|$ for $j=1, 2, 3, 4$ (i.e., from the minimal eigenvalue to the maximal one) separately. We have the following observations from Figure \ref{fig:coef}:
\begin{enumerate}
	\item The trajectory corresponding to the minimal eigenvalue (the purple line) always descends, whereas the others fluctuate near it instead of always staying above or below.
	\item If one trajectory is significantly higher than the others at some iteration, it will drop dramatically within two iterations.
\end{enumerate}

These observations provide an intuition about how the coefficients change during the BB iteration. Except the one corresponding to the minimal eigenvalue, all coefficients have two modes: the shrinking mode (when the absolute value of the coefficient decreases) and the fluctuation mode (when the absolute value of the coefficient increases). The switch between the two modes causes the BB method to have non-monotone behaviors. However, if one coefficient has a significantly larger absolute value than the others at some iteration, it must switch from the fluctuation mode to the shrinking mode in the next. This implies that the coefficients 
alternate between fluctuation and trending in a downward pattern, which motivates our proof idea.

\section{Main Results}\label{sec:theorem}
In this section, we formally define the ``shrinking mode'' and the ``fluctuation mode'' following the motivation given in the previous section. Then with this definition, we make an surprisingly simple analysis to prove that the BB method converges linearly with a rate comparable to the classic SD method 
\footnote{Our first version of the proof takes
at least 6 pages, based on detailed analysis of multiple cases.
We later greatly simplified the proof to just 2.5 pages.}

We first provide a formal definition of the ``shrinking mode'' and the ``fluctuation mode'':

\begin{definition}[Shrinking mode and fluctuation mode]
    ~
    \begin{enumerate}
        \item For a given index $i$, the sequence $\{d_k^i\}$ is in the shrinking mode at iteration $k$ if
        \begin{equation}\label{eq:def-shrinking}
            \sum_{j=1}^n(\lambda_j-\lambda_i)(d_k^j)^2\geqslant0;
        \end{equation}
        \item For a given index $i$, the sequence $\{d_k^i\}$ is in the fluctuation mode at iteration $k$ if
        \begin{equation}\label{eq:def-fluctuation}
        \sum_{j=1}^n(\lambda_j-\lambda_i)(d_k^j)^2<0.
        \end{equation}
    \end{enumerate}
    \end{definition}

According to relation \eqref{eq:coefficient}, the mode of $d_{k-1}^i$ decides whether $d_{k+1}^i$ and $d_k^i$ share the same sign, which is seemingly not related to ``shrinking'' or ``fluctuation''. Nevertheless, the mode actually determines whether the absolute value of $d_{k+1}^i$ can be greater than that of $d_k^i$. In fact, if $d_{k-1}^i$ is in the shrinking mode, $\left|d_{k+1}^i\right|$ is always smaller than $|d_k^i|$. On the other hand, if $d_{k-1}^i$ is in the fluctuation mode, $\left|d_{k+1}^i\right|$ can increase. However, as long as $\left|d_{k-1}^i\right|$ is large enough, we still have $\left|d_{k+1}^i|<|d_k^i\right|$. This finding is illustrated by the following proposition:

\begin{proposition}\label{prop:shrink}
    Consider the BB method for solving the quadratic minimization problem (\ref{eq:problem}) and let $\left\{d_k^i\right\}$ be the coefficient sequences. Then, it holds that
    \begin{equation}
    \label{eq::iterbound_general}
        \left|d_{k+1}^i\right|\leq\left|d_k^i\right|\cdot\left(\max\left\{\frac{\lambda_i}{\lambda_1}-1, 1-\frac{\lambda_i}{\lambda_n}\right\}\right), \forall k\geq 1, 1\leq i\leq n.
    \end{equation}
    Moreover, if at some iteration, one of the following conditions holds:
    \begin{enumerate}
        \item  $d_{k-1}^i$ is in the shrinking mode;
        \item  $d_{k-1}^i$ is in the fluctuation mode, and $(d_{k-1}^i)^2\geq\sum_{j=1}^{i-1}(d_{k-1}^j)^2$;
    \end{enumerate}
    then $d_{k+1}^i$ and $d_k^i$ satisfy
    \begin{equation}
    \label{eq::iterbound_shrink}
        \left|d_{k+1}^i\right|\leq \left(1-\frac{1}{\kappa}\right)\left|d_k^i\right|,
    \end{equation}
    where $\kappa=\frac{\lambda_n}{\lambda_1}$ is the condition number of $A$.
    \end{proposition}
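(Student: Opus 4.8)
The plan is to reduce everything to a bound on the single scalar multiplier appearing in recursion \eqref{eq:coefficient}. Writing $d_{k+1}^i=\rho_k^i\,d_k^i$ with
\[
\rho_k^i:=\frac{\sum_{j=1}^n(\lambda_j-\lambda_i)(d_{k-1}^j)^2}{\sum_{j=1}^n\lambda_j(d_{k-1}^j)^2},
\]
both inequalities in the proposition become statements about $|\rho_k^i|$, and the \emph{sign} of the numerator of $\rho_k^i$ is precisely what the shrinking/fluctuation dichotomy records. Throughout I may assume $g_{k-1}\neq 0$ (otherwise the minimizer is already reached and there is nothing to prove), so the denominator $\sum_j\lambda_j(d_{k-1}^j)^2$ is strictly positive; set $p_j:=(d_{k-1}^j)^2\ge 0$.

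For the general bound \eqref{eq::iterbound_general} I would rewrite $\rho_k^i=1-\lambda_i\,\big(\sum_j p_j\big)\big/\big(\sum_j\lambda_j p_j\big)$ and use the elementary estimate $\lambda_1\sum_j p_j\le\sum_j\lambda_j p_j\le\lambda_n\sum_j p_j$ to obtain $1-\tfrac{\lambda_i}{\lambda_1}\le\rho_k^i\le 1-\tfrac{\lambda_i}{\lambda_n}$. The left endpoint is $\le 0$ and the right endpoint is $\ge 0$, so passing to absolute values yields $|\rho_k^i|\le\max\{\tfrac{\lambda_i}{\lambda_1}-1,\,1-\tfrac{\lambda_i}{\lambda_n}\}$, which is \eqref{eq::iterbound_general}.

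For the sharp bound \eqref{eq::iterbound_shrink} I would split along the two listed cases. In the shrinking-mode case the numerator of $\rho_k^i$ is nonnegative, so $\rho_k^i\ge 0$; combined with $\rho_k^i\le 1-\tfrac{\lambda_i}{\lambda_n}$ and $\lambda_i\ge\lambda_1$ this gives $0\le\rho_k^i\le 1-\tfrac{\lambda_1}{\lambda_n}=1-\tfrac1\kappa$. In the fluctuation-mode case $\rho_k^i<0$, so $|\rho_k^i|=-\rho_k^i=\big(\sum_j(\lambda_i-\lambda_j)p_j\big)\big/\big(\sum_j\lambda_j p_j\big)$, and I would estimate numerator and denominator separately: in the numerator every term with $\lambda_j\ge\lambda_i$ is nonpositive and the surviving indices all satisfy $j\le i-1$ (because the $\lambda_j$ are sorted), so the numerator is $\le\sum_{j=1}^{i-1}(\lambda_i-\lambda_j)p_j\le(\lambda_i-\lambda_1)\sum_{j=1}^{i-1}p_j$; in the denominator, keeping only the $j=i$ term gives $\sum_j\lambda_j p_j\ge\lambda_i p_i$. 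Dividing and then invoking the hypothesis $\sum_{j=1}^{i-1}p_j\le p_i$ together with $\lambda_i\le\lambda_n$ yields $|\rho_k^i|\le\tfrac{(\lambda_i-\lambda_1)p_i}{\lambda_i p_i}=1-\tfrac{\lambda_1}{\lambda_i}\le 1-\tfrac1\kappa$.

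I do not expect a real obstacle; the one place that takes a moment of thought is the fluctuation case --- choosing exactly the one-sided estimates on numerator and denominator that let the hypothesis $(d_{k-1}^i)^2\ge\sum_{j<i}(d_{k-1}^j)^2$ enter cleanly. Ties among the $\lambda_j$ are harmless, since equal-eigenvalue terms contribute $0$ to the numerator of $-\rho_k^i$. The degenerate subcase $p_i=0$ should be dispatched first: the hypothesis then forces $p_j=0$ for all $j\le i$, which makes $\sum_j(\lambda_i-\lambda_j)p_j=\sum_{j>i}(\lambda_i-\lambda_j)p_j\le 0$, contradicting the strict fluctuation inequality; so in case 2 we may assume $p_i>0$ and the division above is legitimate.
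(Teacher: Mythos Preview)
Your proposal is correct and follows essentially the same strategy as the paper: reduce to bounding the scalar multiplier $\rho_k^i$, then handle the shrinking and fluctuation cases by the same one-sided numerator/denominator estimates (in particular, dropping all but the $j=i$ term in the denominator and invoking $(d_{k-1}^i)^2\ge\sum_{j<i}(d_{k-1}^j)^2$ in the fluctuation case). The one noteworthy difference is your derivation of the general bound \eqref{eq::iterbound_general}: rewriting $\rho_k^i=1-\lambda_i\sum_j p_j\big/\sum_j\lambda_j p_j$ and sandwiching the ratio between $1/\lambda_n$ and $1/\lambda_1$ is a bit cleaner than the paper's argument, which splits the numerator into its positive and negative parts and bounds each against the full denominator. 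You are also more careful than the paper about the degenerate subcase $p_i=0$ in the fluctuation branch.
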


\begin{proof}
Note that if $d_k^i=0$, then $d_k^{j}=0$ holds for all $j\geq i$. Thus, without losing generality, assume that $d_k^i\neq0$. 

We first prove \eqref{eq::iterbound_general}. According to \eqref{eq:coefficient},
{\footnotesize 
\begin{equation*}
\begin{aligned}
    \left|\frac{d_k^i}{d_{k-1}^i}\right|&=\left|\frac{\sum_{j=1}^n(\lambda_j-\lambda_i)(d^j_{k-1})^2} {\sum_{j=1}^n\lambda_j(d^j_{k-1})^2}\right|\\
    &\leq\max\left\{\frac{\sum_{j=1}^{i-1}(\lambda_i-\lambda_j)(d_{k-1}^j)^2}{\sum_{j=1}^n\lambda_j(d_{k-1}^j)^2}, \frac{\sum_{j=i+1}^n(\lambda_j-\lambda_i)(d_{k-1}^j)^2}{\sum_{j=1}^n\lambda_j(d_{k-1}^j)^2}\right\}\\
    &=\max\left\{\sum_{j=1}^{i-1}\left(\frac{\lambda_i-1}{\lambda_j}\right)\cdot\frac{\lambda_j(d_{k-1}^j)^2}{\sum_{j=1}^n\lambda_j(d_{k-1}^j)^2}, \sum_{j=i+1}^n\left(1-\frac{\lambda_i}{\lambda_j}\right)\cdot\frac{\lambda_j(d_{k-1}^j)^2}{\sum_{j=1}^n\lambda_j(d_{k-1}^j)^2}\right\}\\
    &\leq\max\left\{\left(\frac{\lambda_i}{\lambda_1}-1\right)\cdot\sum_{j=1}^{i-1} \frac{\lambda_j(d_{k-1}^j)^2}{\sum_{j=1}^n\lambda_j(d_{k-1}^j)^2}, \left(1-\frac{\lambda_i}{\lambda_n}\right)\cdot\sum_{j=i+1}^n \frac{\lambda_j(d_{k-1}^j)^2}{\sum_{j=1}^n\lambda_j(d_{k-1}^j)^2}\right\}\\
    &\leq\max\left\{\frac{\lambda_i}{\lambda_1}-1, 1-\frac{\lambda_i}{\lambda_n}\right\}.
\end{aligned}
\end{equation*}
}

Next, to prove \eqref{eq::iterbound_shrink}, we discuss under the two conditions separately. 

If $d_{k-1}^i$ satisfies the first condition, we have $\sum_{j=1}^{i-1}(\lambda_i-\lambda_j)(d_k^j)^2\leq\sum_{j=i+1}^n(\lambda_j-\lambda_i)(d_k^j)^2$ according to \eqref{eq:def-shrinking}. Therefore,
\begin{equation*}
    \begin{aligned}
   \left|\frac{d_{k+1}^i}{d_k^i}\right| &=
   \frac{\sum_{j=1}^n(\lambda_j-\lambda_i)(d^j_{k-1})^2} {\sum_{j=1}^n\lambda_j(d^j_{k-1})^2} 
        \leq \frac{\sum_{j=i+1}^n(\lambda_j-\lambda_i)(d_{k-1}^j)^2}{\sum_{j=1}^n\lambda_j(d_{k-1}^j)^2}  \\
        &= \sum_{j=i+1}^n
        \left(1-\frac{\lambda_i}{\lambda_j}\right)
        \cdot\frac{\lambda_j(d_{k-1}^j)^2}{\sum_{j=1}^n\lambda_j(d_{k-1}^j)^2}
        \leq  \left(1-\frac{\lambda_i}{\lambda_n}\right)\cdot\sum_{j=i+1}^n \frac{\lambda_j(d_{k-1}^j)^2}{\sum_{j=1}^n\lambda_j(d_{k-1}^j)^2}\\
        &\leq 1-\frac{\lambda_i}{\lambda_n}
        \leq 1-\frac{1}{\kappa}.
    \end{aligned}
\end{equation*}

If $d_{k-1}^i$ satisfies the second condition, we have $\sum_{j=1}^{i-1}(\lambda_i-\lambda_j)(d_k^j)^2>\sum_{j=i+1}^n(\lambda_j-\lambda_i)(d_k^j)^2$ according to \eqref{eq:def-fluctuation}. Therefore,
\begin{equation*}
    \begin{aligned}
        \left|\frac{d_{k+1}^i}{d_k^i}\right|&=\frac{\sum_{j=1}^n(\lambda_i-\lambda_j)(d^j_{k-1})^2} {\sum_{j=1}^n\lambda_j(d^j_{k-1})^2}
        \leq \frac{\sum_{j=1}^{i-1}(\lambda_i-\lambda_j)(d_{k-1}^j)^2}{\sum_{j=1}^n\lambda_j(d_{k-1}^j)^2}\\
        &\leq \frac{\sum_{j=1}^{i-1}(\lambda_i-\lambda_j)(d_{k-1}^j)^2}{\lambda_i(d_{k-1}^i)^2}
        \leq  \left(1-\frac{\lambda_1}{\lambda_i}\right)\cdot\sum_{j=1}^{i-1} \frac{(d_{k-1}^j)^2}{(d_{k-1}^i)^2}\\
        &\leq 1-\frac{\lambda_1}{\lambda_i}
        \leq 1-\frac{1}{\kappa}.
    \end{aligned}
\end{equation*}

Combining the two cases completes the proof.
\end{proof}

With Proposition \ref{prop:shrink}, we present our main theorem of linear convergence with a rather simple proof:
\begin{theorem}\label{thm:convergence}
	Consider the BB method for solving the quadratic minimization problem (\ref{eq:problem}). Let $\{d_k^i\}$ be the coefficient sequences and $\theta=1-\frac{1}{\kappa}$, where $\kappa=\frac{\lambda_n}{\lambda_1}$ is the condition number of $A$. Then $\{d_k^i\}$ converges at least linearly with a rate of $\theta$. In particular, for any $k\geq1$ and $i=1, \cdots, n$, $|d_k^i|\leq F_i\theta^k$, where $F_i$ is a constant defined by the following recursive sequence:
	\begin{equation}\label{eq:constant}
	\left\{\begin{aligned}
	F_1&=d_0^1\\
	F_i&=\max\left\{d_0^i, \frac{d_1^i}{\theta}, \theta^{-2} C^2 \sqrt{\sum_{j=1}^{i-1}F_j^2}\right\}
	\end{aligned}\right.
	\end{equation}
	where $C=\max\left\{\frac{\lambda_i}{\lambda_1}-1, 1-\frac{\lambda_i}{\lambda_n}\right\}.$
\end{theorem}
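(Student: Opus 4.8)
The plan is to prove $|d_k^i|\le F_i\theta^k$ by a nested induction: an outer induction on the eigenvalue index $i=1,\dots,n$ and, for each fixed $i$, an inner induction on the iteration count $k\ge 1$. Proposition \ref{prop:shrink} does all the analytic work; what remains is the bookkeeping that propagates its two regimes — the guaranteed one-step contraction by $\theta$ in \eqref{eq::iterbound_shrink} versus the crude one-step bound by $C$ in \eqref{eq::iterbound_general} — across all iterations. I will also use that the estimate proving \eqref{eq::iterbound_general} applies verbatim to the first step \eqref{eq:first-iter}, so that $|d_m^i|\le C|d_{m-1}^i|$ holds for every $m\ge 1$.

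\emph{Base case $i=1$.} Since $\lambda_j\ge\lambda_1$, we have $\sum_{j=1}^n(\lambda_j-\lambda_1)(d_k^j)^2\ge 0$ for every $k$, so $d_k^1$ is always in the shrinking mode; hence condition 1 of Proposition \ref{prop:shrink} applies at every iteration and $|d_{k+1}^1|\le\theta|d_k^1|$ for all $k\ge 1$. Moreover \eqref{eq:first-iter} with $i=1$ has empty inner sum, so $|d_1^1|\le\theta|d_0^1|$ by the same weighted-average estimate. Chaining these gives $|d_k^1|\le\theta^k d_0^1=F_1\theta^k$.

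\emph{Inductive step.} Fix $i\ge 2$ and assume $|d_k^j|\le F_j\theta^k$ for all $j<i$ and $k\ge 1$; since $F_j\ge d_0^j$ by \eqref{eq:constant}, in fact $|d_m^j|\le F_j\theta^m$ for all $j<i$ and all $m\ge 0$. We now induct on $k$. For $k=1$ the bound $|d_1^i|\le F_i\theta$ is immediate from $F_i\ge d_1^i/\theta$. For $k\ge 2$, apply Proposition \ref{prop:shrink} at iteration $k-1$ and split on the mode of $d_{k-2}^i$. If $d_{k-2}^i$ is in the shrinking mode, or in the fluctuation mode with $(d_{k-2}^i)^2\ge\sum_{j<i}(d_{k-2}^j)^2$, then $|d_k^i|\le\theta|d_{k-1}^i|\le\theta\cdot F_i\theta^{k-1}=F_i\theta^k$, the second inequality being the inner hypothesis. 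Otherwise $d_{k-2}^i$ is in the fluctuation mode with $(d_{k-2}^i)^2<\sum_{j<i}(d_{k-2}^j)^2$, so $|d_{k-2}^i|<\sqrt{\sum_{j<i}(d_{k-2}^j)^2}\le\theta^{k-2}\sqrt{\sum_{j<i}F_j^2}$ by the outer hypothesis; applying $|d_m^i|\le C|d_{m-1}^i|$ twice then yields $|d_k^i|\le C^2|d_{k-2}^i|\le\theta^{-2}C^2\sqrt{\sum_{j<i}F_j^2}\cdot\theta^k\le F_i\theta^k$, the last step using the third term of the $\max$ in \eqref{eq:constant}. This closes both inductions, and $\|g_k\|_2=\sqrt{\sum_i(d_k^i)^2}\le\theta^k\sqrt{\sum_i F_i^2}$ then gives $R$-linear convergence at rate $\theta$.

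The step I expect to require the most care is the boundary bookkeeping at small $k$: verifying that Proposition \ref{prop:shrink}, whose proof invokes \eqref{eq:coefficient}, still governs the transition $d_1^i\mapsto d_2^i$ (which is described instead by the first-iteration relation \eqref{eq:first-iter}), and that when $k=2$ the factor $\theta^{k-2}$ degenerates to $\theta^0=1$, correctly paired with the bound $|d_0^j|\le F_j$ rather than with an inner-hypothesis bound valid only for $m\ge 1$. Everything else is routine substitution once Proposition \ref{prop:shrink} is available.
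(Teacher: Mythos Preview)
Your proof is correct and takes essentially the same approach as the paper: outer induction on $i$, with Proposition~\ref{prop:shrink} supplying both the $\theta$-contraction and the crude $C$-bound. The only cosmetic difference is that the paper runs the inner step as a minimal-counterexample argument (assume a first $k\ge 2$ where the bound fails, deduce $|d_{k-2}^i|$ is large enough to force the $\theta$-contraction, contradict minimality at $k-1$), whereas you unfold this into a direct case split on the mode of $d_{k-2}^i$; the two are logically equivalent, and your explicit attention to the $m=0,1$ boundary cases is a small improvement in rigor over the paper's presentation.
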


\begin{proof}
	We prove the result by induction on $i=1, \cdots, n$. For $i=1$, note that \eqref{eq:def-shrinking} always holds, so $d_k^1$ is always in the shrinking mode. According to Proposition \ref{prop:shrink}, we have $\left|\frac{d_{k+1}^i}{d_k^i}\right| \leq 1-\frac{\lambda_1}{\lambda_n}$ for any $k$. This implies that $\left|d_k^1\right|\leq \left|d_0^1\right|\theta^k$.
	
	Suppose that the result holds for all $d_k^j$ when $j=1, \cdots, i-1$. We prove by contrapositive that it also holds for all $d_k^i$. The result trivially holds for $k=0, 1$. Assume, in contrast, that the result does not hold for some $k\geq 2$. We find the minimal $k\geq 2$ such that $|d_k^i|>F_i\theta^k$. Note that by Proposition \ref{prop:shrink}, $\left|\frac{d_k^i}{d_{k-1}^i}\right|\leq C$ for any $k\geq 1$. Therefore, 
	\begin{equation*}
		\left|d_{k-2}^i\right|\geq\frac{\left|d_k^i\right|}{C^2} >\frac{F_i\theta^k}{C^2} \geqslant \sqrt{\sum_{j=1}^{i-1}F_j^2}\theta^{k-2} \geq\sqrt{\sum_{j=1}^{i-1}(d_{k-2}^j)^2}.
	\end{equation*}
	Using the result of Proposition \ref{prop:shrink}, we immediately obtain that $|d_{k-1}^i|\geq |d_k^i|/\theta>F_i\theta^{k-1}$, which contradicts our assumption that $k$ is the minimal index such that the result fails to hold. Hence we have shown that the result holds for all $d_k^i$, and by induction the proof is complete.
\end{proof}

\textbf{Remark 1}: The linear rate we achieve in Theorem \ref{thm:convergence} is $1-\frac{1}{\kappa}$, which is much faster than the previous results \cite{dai2002r}. Furthermore, this rate is comparable to the convergence rate of the SD method ($1-\frac{2}{\kappa+1}$), so our result indicates that the BB method is at least comparable to the SD method in terms of the convergence
rate. 
It is worth mentioning that the constant term $F_i$ in the theorem is highly related to the eigenvalues of $A$, and a trivial upper bound for $F_i$ is $\mathcal{O}((1+\kappa^2)^{i-1})$.

\textbf{Remark 2}: We briefly discuss the differences
of our proof with earlier proofs of convergence in \cite{raydan1993barzilai,dai2002r}. 

\section{Example with Exact Linear Rate Convergence}\label{sec:lower-bound}

One may wonder whether the BB method achieves an even faster convergence rate since in practice it performs much better than the SD method. The answer we provide to this question is: not always. In particular, we present an example with a specifically chosen initialization to show that the BB method may converge at the same speed as the SD method.

\begin{proposition}\label{prop:lower-bound}
	Consider the BB method for solving the quadratic minimization problem (\ref{eq:problem}). For any $A\in \mathcal{S}^n_{++}$, there exists $x_0\in \mathbb{R}^n$ such that the sequence $\{x_k\}$ generated by the BB method satisfies
	\begin{equation}\label{eq:lower-bound}
		|g_k|\geq\left(\frac{\kappa-1}{\kappa+1}\right)^k|g_0|,
    \end{equation}
    where $g_k=Ax_k-c$ is the gradient at each iteration.
\end{proposition}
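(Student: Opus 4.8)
The plan is to use the classical ``zigzag'' initialization that makes steepest descent attain its worst-case rate, and to check that from this starting point the Barzilai--Borwein iteration in fact coincides with steepest descent step by step. Set $\rho = \frac{\kappa-1}{\kappa+1} = \frac{\lambda_n-\lambda_1}{\lambda_n+\lambda_1}$. Using $c=0$ without loss of generality (as in Section~\ref{sec:setting}), I would pick $x_0 = \lambda_1^{-1} v_1 + \lambda_n^{-1} v_n$, so that $g_0 = A x_0 = v_1 + v_n$; equivalently, in the coefficient notation of \eqref{eq:decomposition}, $d_0^1 = d_0^n = 1$ and $d_0^j = 0$ for $1 < j < n$. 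The first point to record is that \eqref{eq:first-iter} and \eqref{eq:coefficient} preserve this support pattern: a coefficient that is zero stays zero, so the entire trajectory remains in $\mathrm{span}\{v_1, v_n\}$ and the dynamics reduces to the pair $(d_k^1, d_k^n)$.

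The core of the argument is the invariant $|d_k^1| = |d_k^n| = \rho^k$, which I would prove by induction on $k$. The cases $k=0$ and $k=1$ are direct: substituting $d_0$ into \eqref{eq:first-iter} gives $d_1^1 = \rho$ and $d_1^n = -\rho$. For the inductive step, suppose the formula holds at iterations $k-1$ and $k$ (so $(d_{k-1}^1)^2 = (d_{k-1}^n)^2 = \rho^{2(k-1)}$). Then the bracketed multiplier in \eqref{eq:coefficient} simplifies, for $i=1$, to $\frac{(\lambda_n-\lambda_1)(d_{k-1}^n)^2}{\lambda_1(d_{k-1}^1)^2 + \lambda_n(d_{k-1}^n)^2} = \frac{\lambda_n-\lambda_1}{\lambda_n+\lambda_1} = \rho$, and for $i=n$ to $-\rho$; hence $|d_{k+1}^1| = \rho\,|d_k^1| = \rho^{k+1}$ and $|d_{k+1}^n| = \rho^{k+1}$. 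Since $\rho > 0$ whenever $\kappa > 1$, the denominators $\lambda_1(d_{k-1}^1)^2 + \lambda_n(d_{k-1}^n)^2$ are strictly positive for all $k$, so the BB stepsize is well defined along the whole trajectory (and the degenerate case $\kappa=1$ is trivial).

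Finally, because only the $1$st and $n$th coefficients are nonzero and the $v_i$ are orthonormal, $|g_k|^2 = (d_k^1)^2 + (d_k^n)^2 = 2\rho^{2k} = \rho^{2k}\,|g_0|^2$, so $|g_k| = \rho^k\,|g_0|$ --- which gives \eqref{eq:lower-bound}, in fact with equality. I do not expect a real obstacle; the only care needed is in bookkeeping the two-step recursion \eqref{eq:coefficient} (the induction hypothesis must carry two consecutive iterates) and in noting that this trajectory never reaches $g_k = 0$, so the BB stepsize is always defined. As a sanity check and source of intuition, one can observe that along this path the BB stepsize equals the constant value $\frac{2}{\lambda_1+\lambda_n}$ at every iteration --- exactly the steepest-descent stepsize --- so BB literally executes the SD iteration here; but the coefficient recursion furnishes the cleanest self-contained proof.
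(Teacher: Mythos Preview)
Your proposal is correct and follows essentially the same approach as the paper: the same initialization $g_0=v_1+v_n$, the same reduction to the two coefficients $(d_k^1,d_k^n)$ via \eqref{eq:coefficient}, and the same induction establishing $|d_k^1|=|d_k^n|=\rho^k$. Your additional remarks on well-definedness of the BB stepsize, the trivial case $\kappa=1$, and the coincidence with the SD step along this trajectory are helpful polish but do not change the underlying argument.
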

\begin{proof}
    Assume that $A$ has eigenvalues $0<\lambda_1\leq\cdots\leq\lambda_n$ with the corresponding eigenvectors $v_1, \cdots, v_n$. Let $x_0=A^{-1}(c+v_1+v_n)$, and thus $g_0=Ax_0-c=v_1+v_n$. Decomposing $g_0$ according to the basis of $v_1, \cdots, v_n$ yields 
    \begin{equation*}
        d_0^1=d_0^n=1,~~ d_0^j=0,~~ j=2, \cdots, n-1.
    \end{equation*}
    
    By relation \eqref{eq:coefficient}, it is apparent to see that $d_k^j=0$ for all $j=2, \cdots, n-1$. Furthermore, for all $k\geq 1$,
    \begin{equation}\label{eq:example}
        \begin{aligned}
            d_{k+1}^1&=d_k^1\cdot \left(\frac{\sum_{j=1}^n(\lambda_j-\lambda_1)(d^j_{k-1})^2} {\sum_{j=1}^n\lambda_j(d^j_{k-1})^2}\right)=d_k^1\cdot\frac{(\lambda_n-\lambda_1)(d^n_{k-1})^2}{\lambda_1(d^1_{k-1})^2+\lambda_n(d^n_{k-1})^2},\\
            d_{k+1}^n&=d_k^1\cdot \left(\frac{\sum_{j=1}^n(\lambda_j-\lambda_n)(d^j_{k-1})^2} {\sum_{j=1}^n\lambda_j(d^j_{k-1})^2}\right)=d_k^n\cdot\frac{(\lambda_1-\lambda_n)(d^1_{k-1})^2}{\lambda_1(d^1_{k-1})^2+\lambda_n(d^n_{k-1})^2}.\\
        \end{aligned}
	\end{equation}
    Note that $d_0^1=d_0^n=1$ and $d_1^1=\frac{\lambda_n-\lambda_1}{\lambda_n+\lambda_1}, d_1^n=-\frac{\lambda_n-\lambda_1}{\lambda_n+\lambda_1}$ due to \eqref{eq:first-iter}, so $(d_0^1)^2=(d_0^n)^2, (d_1^1)^2=(d_1^n)^2$. According to relation \eqref{eq:example}, we can show by induction that $(d_k^1)^2=(d_k^n)^2$ for every $k\geq0$. As a result, \eqref{eq:example} is simplified as
    \begin{equation*}
            d_{k+1}^1=d_k^1\cdot \frac{\lambda_n-\lambda_1}{\lambda_n+\lambda_1},~~
            d_{k+1}^n=d_k^n\cdot \frac{\lambda_1-\lambda_n}{\lambda_n+\lambda_1},
    \end{equation*}
    which implies that 
    \begin{equation*}
        d_1^k=\left(\frac{\lambda_n-\lambda_1}{\lambda_n+\lambda_1}\right)^k=\left(\frac{\kappa-1}{\kappa+1}\right)^k,~~ d_n^k=\left(\frac{\lambda_1-\lambda_n}{\lambda_n+\lambda_1}\right)^k=(-1)^k\left(\frac{\kappa-1}{\kappa+1}\right)^k.
    \end{equation*}
    Therefore inequality \eqref{eq:lower-bound} holds for the example we construct.
\end{proof}

A $2$-dimensional version of the example is mentioned in \cite{dai2013new}. Specifically, \cite{dai2013new} proved that the BB method has a superlinear convergence rate for probability one measure of initial points while converges linearly for zero measure of initial points. Proposition \ref{prop:lower-bound} demonstrates that for any-dimensional quadratic problems, the BB method converges no faster than the SD method when selecting some specific initial points. It also implies that there is no superlinear rate convergence for the BB method if we allow arbitrary initialization, which indicates the tightness of Theorem \ref{thm:convergence}. Nevertheless, we believe that the set of such initial points is zero measure. As for generic initialization, just as the case when $n=2$, the BB method can still be much faster for generic initialization; 
we leave the investigation of this matter to future work. 

\section{Conclusion}\label{sec:conclusion}

In this paper, we consider the BB method for solving  quadratic problems. 
The existing rate of convergence is much worse
than the steepest descent method (or gradient descent with constant stepsize).
We prove that the convergence rate of BB methods
is at least $1-\frac{1}{\kappa}$, which is stronger than the 
existing results and is comparable to the SD method. 
Moreover, an example is constructed to show that for some initial points, the BB method converges at the same rate as the SD method.
An interesting research direction is to study whether a faster convergence rate exists when additional assumptions are imposed on the initial points. 


\end{document}